\newtheorem{thm}{Theorem}[section]
\newtheorem{cor}[thm]{Corollary}
\newtheorem{lem}[thm]{Lemma}
\newtheorem{prop}[thm]{Proposition}
\theoremstyle{definition}
\newtheorem{defn}[thm]{Definition}
\newtheorem{rem}[thm]{Remark}
\newtheorem*{rem*}{Remark}
\newcommand{\C}{\mathbb{C}}
\newcommand{\N}{\mathbb{N}}
\newcommand{\D}{\mathbb{D}}
\newcommand{\ov}{\overline}
\newcommand{\ra}{\rightarrow}
\newcommand{\BB}{{\cal B}}
\newcommand{\CC}{{\cal C}}
\newcommand{\FF}{{\cal F}}
\newcommand{\NN}{{\cal N}}
\newcommand{\PP}{{\cal P}}
\renewcommand{\SS}{{\cal S}}
\newcommand{\TT}{{\cal T}}
\newcommand{\XX}{{\cal X}}
\renewcommand{\hat}{\widehat}
\newcommand{\s}{{\underline{s}}}
\newcommand{\BBt}{{\tilde{\BB}}}
\renewcommand{\hat}{\widehat}
\newcommand{\De}{\Delta}
\newcommand{\dDe}{\partial\Delta}
\newcommand{\Bhat}{\widehat{B}}
\newcommand{\BBhat}{\widehat{\BB}}
 \title{Singular values and non-repelling cycles for entire transcendental maps}
\author{Anna Miriam Benini \thanks{Partially supported by   the Marie Curie IEF grant H2020 703269 COTRADY.} {\small and}  N\'uria Fagella \thanks{Partially supported by the Spanish 
grant MTM2014-52209-C2-2-P and the Maria de Maeztu Excellence Grant MDM-2014-0445 of the BGSMath.} \\
\small Dept.~de Matem\`atica y Inform\`atica, Universitat de Barcelona\\ \small Barcelona Graduate School of Mathematics (BGSMath) \\ 
\small Gran Via 585 08007, Barcelona}
\begin{document}

\maketitle  
\begin{abstract}Let $f$ be a map with bounded set of singular values for which periodic dynamic rays exist and land. We prove that each non-repelling cycle is associated to a singular orbit  which cannot accumulate on any other non-repelling cycle. When $f$ has finitely many singular values this implies a refinement  of the Fatou-Shishikura inequality. Our approach is combinatorial in the spirit of the approach used by   \cite{Ki00}, \cite{BCLOS16} for polynomials. \let\thefootnote\relax\footnote{2010 {\em Mathematics Subject Classification}. Primary 30D05, 37F10, 30D30.}

\end{abstract}

\section{Introduction}

Consider the iteration of an entire transcendental  map $f:\C\ra\C$. The map $f$ fails to be a covering due to the presence of \emph{singular values}, that is the set $S(f)$ of points near which not all inverse branches of $f^{-1}$ are well defined and univalent.  While the singular values of rational maps are always {\em critical values} (images of zeros of $f'$ or {\em critical points}), transcendental functions may have also {\em asymptotic values}, and we have that 

\[S(f)=\overline{\{\text{critical and asymptotic values for $f$}\}}.\]

 Recall that $s\in \C$ is an asymptotic value if there exists a curve $\gamma:[0,\infty)\ra\C$ such that $|\gamma(t)|\ra\infty$ as $t\ra\infty$ and $f(\gamma(t))\ra s$ as $t\ra\infty$ (for example, $s=0$ is an asymptotic value for the map $z\mapsto \exp(z)$, and the curve $\gamma$ can be taken to be the negative real axis).

Special classes of maps are singled out in terms of their set of singular values and will be important for our discusion. More precisely define 
\[
\mathcal{S} = \{ f:\C\to\C \text{\ entire} \mid \#S(f)<\infty\} 
\]
and
\[
\BB = \{ f:\C\to\C \text{\ entire} \mid S(f) \text{\ is bounded} \}, 
\]
Class $\SS$ and class $\BB$
 are known as the {\em Speiser} class and the {\em Eremenko Lyubich} class respectively. The elements in  $\mathcal{S}$ are called  {\em finite type} maps while those in $\mathcal{B}$  are  of {\em bounded type}.

Singular values play a crucial role to understand the dynamics of $f$. Indeed, using local dynamics it is possible to investigate the relation between  singular orbits and non-repelling cycles. For example it is well known that each cycle of parabolic and attracting basins contains a singular value \cite{Fa20,Mi}. Using normal families arguments one can also see that Cremer points and all points in the boundary of Siegel disks are contained in the accumulation set of the orbits of the singular values \cite{Fa20,Mi}.
 While the orbit of a singular value which belongs to an attracting or parabolic basin is fully contained  in the basin and only accumulates on the attracting or parabolic point associated to the basin,  it is not clear using only local theory that a unique singular orbit cannot accumulate, for example, on many Cremer cycles or cycles of boundaries of Siegel disks. 
 
Nevertheless, using perturbation arguments in the finite dimensional space of rational maps of degree $d\geq 2$,  Fatou \cite{Fa20} was able to show that the number of non-repelling cycles of a rational map of degree $d\geq 2$ is bounded by  $4d-2$, that is twice the number of its critical points counted with multiplicity. 
 Afterwards he conjectured that the optimal bound should be $2d-1$. 
With this goal in mind, Shishikura  \cite{Sh87} used quasiconformal surgery   to  perturb simultaneously all indifferent cycles to attracting ones, proving Fatou's conjecture, known nowadays as the \emph{Fatou-Shishikura inequality}. A simpler proof for polynomials using perturbation in the class of weakly polynomial-like maps can be found in \cite{DH85}, while  a  different approach using quadratic differentials is in \cite{Ep}.
Later on, it was proven in \cite[Theorem 5]{EL92} and \cite{GK} that  for an  entire transcendental map of finite type,  the number of non-repelling cycles is also bounded by the number of singular values of the map, proving the Fatou-Shishikura inequality for this class $\mathcal{S}$ of functions. 

All these results, like those on the non-existence of wandering domains for these finite-dimensional families,  are based on arguments in parameter space, and, despite giving a sharp bound, they do not provide dynamical information on how exactly the orbits of the singular values relate to the non -repelling cycles. For example, a priori  if the function has $q$ singular values and $q$ Cremer cycles there could be a unique singular value whose orbit  accumulates on the $q$ Cremer cycles while the remaining $q-1$ singular values do not accumulate on any Cremer point. 

A different  combinatorial approach  for polynomials with connected Julia sets was suggested by Kiwi \cite{Ki00}. Kiwi's approach is dynamical and associates to each non-repelling cycle a specific singular orbit. Observe that for polynomials the Julia set fails to be connected only when singular values are escaping. {This is a well understood case and  Kiwi's approach can be extended to the case in which  the Julia set is not connected (compare with \cite{BCLOS16})}. 
Very recently another dynamical approach involving  laminations and fibers was used in \cite{BCLOS16} to prove  a more general version of the Fatou-Shishikura inequality   for polynomials,  which takes into account also wandering branch continua. These new  approaches prove among others the following statement, which is slightly stronger than the classical Fatou-Shishikura inequality.  

\begin{prop}[\cite{BCLOS16}]\label{prop:Strong Shishi Poly} Let $P$ be a polynomial. Then any non-repelling cycle is associated to a weakly recurrent critical point, and distinct non-repelling cycles are associated to distinct  critical points.
\end{prop}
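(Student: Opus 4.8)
The plan is to reprove and sharpen the Fatou--Shishikura inequality through the combinatorial machinery of rational laminations and fibers, in the spirit of \cite{Ki00,BCLOS16}. First, one may assume that $J(P)$ is connected: if all critical points escape then $J(P)$ is a Cantor set on which $P$ is conjugate to a one-sided shift and there are no non-repelling cycles at all, and the intermediate disconnected cases reduce to the connected one by restricting to polynomial-like pieces and to the lamination carried by the set of non-escaping external angles (Branner--Hubbard, Qiu--Yin; cf.\ \cite{BCLOS16}). So normalize $P$ to be monic and centred, and recall that every periodic external ray lands at a repelling or parabolic periodic point, while every repelling or parabolic periodic point is the landing point of at least one, and of only finitely many, periodic rays (Douady, Yoccoz; see \cite{Mi}). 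This landing data defines the rational lamination $\lambda_P$; through it one assigns to each $z\in J(P)$ its \emph{fiber} $\operatorname{Fib}(z)$, namely the set of points that cannot be separated from $z$ by finitely many pairs of (pre)periodic rays landing together and by closures of periodic bounded Fatou components. Fibers are compact and connected, any two are equal or disjoint, and $P(\operatorname{Fib}(z))=\operatorname{Fib}(P(z))$; a fiber or a gap of $\lambda_P$ is \emph{periodic} when some iterate of $P$ fixes it.

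Next I attach to each non-repelling cycle $\langle z_0,\dots,z_{p-1}\rangle$ a periodic combinatorial object together with a critical point. If the cycle is attracting or parabolic, the classical Fatou argument already puts a critical point in its immediate basin; take that one and observe that its forward orbit accumulates on the cycle. If the cycle is of Cremer type, every $z_i$ is non-accessible and, by the classification of periodic fibers (Kiwi, Schleicher), lies in a \emph{nondegenerate} periodic fiber $F_i$ with $P(F_i)=F_{i+1}$. If the cycle bounds a cycle of Siegel disks, the boundaries of these disks constitute a nondegenerate periodic continuum, realized in the lamination by an \emph{irrational periodic gap} whose first-return map is an irrational rotation of the circle at infinity. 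In each of the last two cases the crucial structural input is that the grand orbit of the attached periodic object must contain a critical point of $P$ --- equivalently, $\lambda_P$ carries a critical leaf or a critical gap in that grand orbit --- for otherwise the combinatorial first-return dynamics would be strictly contracting near the object and force it to be a single point, a contradiction. Charging such a critical point $c$ to the cycle, one checks that the orbit of $c$ returns arbitrarily close, under $P^{p}$, to the attached periodic fiber or gap, so $\omega(c)$ meets it and $c$ is weakly recurrent in the sense of \cite{BCLOS16}.

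Finally one shows the charging is injective, which gives the two assertions. Distinct non-repelling cycles receive distinct periodic objects: fibers partition $J(P)$ and are permuted by $P$, gaps of $\lambda_P$ are pairwise disjoint and likewise permuted, and one verifies that a single periodic fiber, respectively irrational gap, contains (respectively surrounds) at most one non-repelling cycle. Moreover a fixed critical point $c$ is charged to a cycle only if some $\operatorname{Fib}(P^{k}(c))$ --- or the gap through $P^k(c)$ --- lies in that cycle's periodic-object cycle; since the fibers $\operatorname{Fib}(P^{k}(c))$ for $k\ge0$ form a single forward orbit, and once a forward orbit of fibers or gaps enters a periodic cycle it remains there, $c$ can be charged to at most one non-repelling cycle. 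Together with the Riemann--Hurwitz bookkeeping for $\lambda_P$ (a degree-$d$ invariant lamination supports at most $d-1$ critical leaves and gaps, matched against the $d-1$ critical points counted with multiplicity) this establishes the Proposition. The main obstacle is the structural lemma used in the second step --- that a periodic non-Fatou fiber, or an irrational periodic gap, cannot be critically starved, a periodic-orbit analogue of Thurston's no-wandering-triangle theorem --- whose proof rests on the contraction/expansion estimates for $\lambda_P$ on the complement of its critical gaps from \cite{Ki00}, refined in \cite{BCLOS16} to yield injectivity of the charge and the statement on weak recurrence.
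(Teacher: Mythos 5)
Note first that the paper does not prove Proposition~\ref{prop:Strong Shishi Poly}: it is quoted verbatim from \cite{BCLOS16} as motivation, so there is no internal proof to measure you against. Your sketch is a reasonable outline of the external argument of \cite{Ki00,BCLOS16} --- reduction to connected $J(P)$, rational lamination and fibers, nondegenerate periodic fibers for Cremer points and irrational gaps for Siegel cycles, a ``no critically starved periodic object'' lemma, and an injective charging of critical points --- and that is indeed the route those references take. Be aware, though, that you have left the entire load-bearing step (that a periodic nondegenerate fiber or irrational gap must meet the grand orbit of a critical point, and that the charge is injective and yields weak recurrence) as a citation; in \cite{BCLOS16} that is precisely where the work is, so as a standalone proof your text is a roadmap rather than an argument. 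Also, the mechanism you invoke for that lemma (``combinatorial first-return contraction'') is not quite how it is usually run: for Cremer and Siegel cycles the engine is analytic, namely Montel's theorem applied to inverse branches whose obstruction to univalent continuation is a critical value.

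It is worth contrasting this with how the present paper proves its own transcendental analogues (Theorem~\ref{Singular orbits trapped in basic regions} and Proposition~\ref{FS1}). There, laminations and fibers are replaced by the \emph{basic regions} of the Goldberg--Milnor-type Separation Theorem \ref{Separation Theorem Entire} of \cite{BF15,GM}: the graph of periodic ray pairs separates distinct non-repelling cycles into disjoint cycles of basic regions, and the normal-families pullback argument localizes a singular orbit inside the cycle of regions containing $\XX$. Existence of the associated singular value comes from the failure of univalent inverse branches (Montel), and injectivity comes for free from the Separation Theorem applied to $f^{\ell q}$, with no need for fibers, gaps, or a no-wandering-triangle-type lemma. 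The lamination approach buys the finer conclusions of \cite{BCLOS16} (wandering branch continua, the precise notion of weak recurrence); the separation approach buys robustness, which is what lets the present paper carry the statement over to entire maps in $\BBhat$ where no global lamination of a circle at infinity is available.
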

 
 It is a natural question whether these stronger versions for polynomials hold also for their transcendental analogues, that is for entire transcendental maps of finite type. And even further, whether or how far  they can be pushed when dealing with maps with an infinite number of singular values.  This approach is inlaid in the frame of understanding whether these relationships between singular values and non-repelling cycles are intrinsically of local nature or instead rely on the global structure of the families of  maps under consideration.
 
In this paper we combine the main results in \cite{BF15} with  classical normal families arguments and some combinatorics to give some answers to both of these questions.  Our main result (Theorem~\ref{Singular orbits trapped in basic regions}) applies to maps in class $\mathcal{B}$ with some additional conditions and, when applied to functions of finite type gives the transcendental version of Proposition~\ref{prop:Strong Shishi Poly}.

More precisely let $\BBhat \subset \BB$ be the class of entire transcendental functions defined in \cite{RRRS} for which the escaping set consists of curves, known as {\em dynamic rays} (see Section~\ref{sect:Background}). Class $\BBhat$ contains all functions in $\BB$ which are either of finite order or composition of functions of finite order in class $\BB$. A dynamic ray $G$ is \emph{periodic} if $f^n(G)=G$ for some $n\in\N$ and we say that a ray  \emph{lands} if $\ov{G}\setminus G=\{z_0\}\subset\C$. Finally a  \emph{rationally invisible} repelling point is a  repelling periodic point which is not the landing point of any periodic ray (see Remark~\ref{Invisible}).

\begin{prop}[Fatou-Shishikura inequality]\label{FS1}
Let $f$ be  an entire transcendental map in class $\BBhat$ with $N<\infty$ singular values, whose periodic rays land.  Then $f$ has at most $N$ non-repelling cycles.

Moreover for each non-repelling cycle $\mathcal{X}$ there exists at least one singular orbit  $\{f^n(s)\}_{n\in\N}$ with $s\in S(f)$  which is associated to $\mathcal{X}$ in the sense that 
\begin{enumerate}
\item[\rm (1)]  $\{f^n(s)\}_{n\in\N}$ accumulates on every element of  $\mathcal{X}$ (or, the boundaries of the  Siegel disks containing $\XX$ );
\item[\rm (2)] $\{f^n(s)\}_{n\in\N}$ does not accumulate on any other non-repelling cycle, nor on any rationally invisible repelling point, nor on any point on the boundary of a Siegel disk $\Delta\notin \XX$ (provided the point is not on a periodic ray or a periodic point).
\end{enumerate}
\end{prop}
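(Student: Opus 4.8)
The plan is to derive Proposition~\ref{FS1} from the main theorem, Theorem~\ref{Singular orbits trapped in basic regions}, by specialising it to finite type maps and then converting its ``trapping'' conclusion into the dynamical statements (1) and (2), from which the bound $N$ will follow by a short counting argument.

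First I would fix the combinatorial setup from \cite{BF15}: since $f \in \BBhat$ and every periodic dynamic ray lands, each finite forward invariant collection $\Gamma$ of periodic rays together with their repelling landing points divides $\C$ into finitely many \emph{basic regions}. A Cremer periodic point, or a point on the boundary of a periodic Siegel disk, is not a repelling periodic point and lies on no periodic ray, hence it avoids every such $\Gamma$; consequently a non-repelling cycle $\XX$ lies in the interiors of basic regions and has a well-defined cyclic itinerary among them. (When $\XX$ is attracting or parabolic the proposition is classical, the associated singular orbit being one that converges to $\XX$ or enters a parabolic petal, so the real content below is the Cremer and Siegel cases.) For a fixed $\XX$, Theorem~\ref{Singular orbits trapped in basic regions} provides a singular value $s \in S(f)$ whose orbit $\{f^n(s)\}$ is \emph{trapped with} $\XX$: for every admissible $\Gamma$ and every large $n$, $f^n(s)$ lies in the same basic region as the corresponding point of the itinerary of $\XX$.

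Next I would upgrade trapping to property (1). Pick a point $x_0$ of the cycle (if $\XX$ is a cycle of Siegel centres, pick instead a point $x_0$ of the boundary of the corresponding Siegel disk $\Delta$). Choosing a sequence of admissible graphs $\Gamma_m$ whose basic regions around $x_0$ shrink --- onto $\{x_0\}$ in the Cremer case, onto a set containing $\partial\Delta$ in the Siegel case --- trapping forces a subsequence of $\{f^n(s)\}$ into each of these shrinking regions, so $\{f^n(s)\}$ accumulates on $x_0$, respectively on $\partial\Delta$. Accumulation at one point of the cycle then propagates to all of them, since $f^{n_k}(s) \to x_0$ implies $f^{n_k + j}(s) \to f^j(x_0)$ by continuity of $f$. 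This is the step I expect to be the main obstacle: it needs the construction of \emph{enough} separating periodic rays for the basic regions actually to shrink onto the cycle, while keeping control of the transcendental phenomena (rays escaping to $\infty$, non-compactness); in particular one must exclude that the intersection of all basic regions containing a Cremer periodic point is a nondegenerate continuum, the analogue of the delicate ``trivial fibers'' issue in Kiwi's approach.

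Property (2) is obtained by running the same mechanism in the opposite direction. Given another non-repelling cycle $\YY$, a rationally invisible repelling point $y$, or a point $y$ on the boundary of a Siegel disk $\Delta' \notin \XX$ with $y$ lying on no periodic ray and being no periodic point, one builds an admissible graph $\Gamma$ with $\XX$ inside one basic region $V$ and with $y$ (resp.\ $\YY$, resp.\ $\partial\Delta'$) in the complementary basic regions, not on $\Gamma$ --- the hypotheses on $y$ are precisely what prevents $y$ from sitting on $\Gamma$. Since $\{f^n(s)\}$ is trapped with $\XX$, for large $n$ it follows the itinerary of $\XX$ and hence stays a definite distance away from $y$, so it cannot accumulate on it. Finally, the assignment $\XX \mapsto s$ is injective: if distinct non-repelling cycles $\XX \neq \YY$ were associated to the same singular value $s$, then by (1) the orbit $\{f^n(s)\}$ would accumulate both on $\XX$ and on $\YY$, contradicting (2). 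Therefore the number of non-repelling cycles is at most $\#S(f) = N$.
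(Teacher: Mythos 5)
Your overall architecture matches the paper's: specialise Theorem~\ref{Singular orbits trapped in basic regions} (case (1) applies because $N<\infty$), use the Separation Theorem with $p=\ell\cdot q$ to place $\XX$ and any other cycle $\YY$ (or rationally invisible repelling point, or other Siegel boundary) in different basic regions for property (2), and finish with a counting/injectivity argument. Your treatment of (2) and the final count are essentially the paper's.

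The genuine gap is in your derivation of property (1). You extract from Theorem~\ref{Singular orbits trapped in basic regions} only the ``trapping'' conclusion (the orbit of $s$ follows the itinerary of $\XX$ among basic regions) and then propose to recover accumulation by choosing a sequence of graphs $\Gamma_m$ whose basic regions shrink onto the Cremer point, resp.\ onto a set containing $\partial\Delta$. As you yourself suspect, this step does not close: basic regions are bounded by finitely many periodic ray pairs, and the intersection over all $p$ of the basic regions containing a Cremer point is in general a nondegenerate continuum --- this is exactly the nontrivial-fiber phenomenon, Julia sets with Cremer points are never locally connected, and no shrinking statement is proved (or provable by these means) anywhere in the paper. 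The point you are missing is that no such step is needed: the accumulation assertion is already part of the conclusion of case (1) of Theorem~\ref{Singular orbits trapped in basic regions} (``the orbit of $v$ accumulates either on the non-repelling cycle or on the boundary of the cycle of Siegel disks''), and it is established there not by shrinking regions but by a Montel/normal-families argument applied to inverse branches at a point $w\in\partial\Delta_0$ (resp.\ at the Cremer point), in the spirit of the classical proof that $\partial\Delta\subset\PP(f)$; continuity of $f$ then propagates accumulation along the cycle. Likewise, the non-accumulation statement (2) is already contained in the ``Moreover'' clause of that theorem. The fix is simply to quote the full conclusion of the theorem you invoke rather than re-deriving part of it by a route that hits a known obstruction.
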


Let us make some comments on the statement above.  For functions of finite type, Proposition~\ref{FS1} gives more information than the classical Fatou-Shishikura inequality in \cite{EL92}, since it associates individual singular orbits to individual non-repelling cycles rather than relying on a global counting of the number of singular values and the number of non-repelling cycles. However our proof requires the existence of dynamic rays (which is ensured by assuming that $f$ is a composition of functions of finite order in class $\BB$), and that periodic dynamic rays land. Although it is expected that all periodic rays land except  those whose forward orbit hits a singular value (as it is the case for polynomials), so far this has only been proven for the family $e^z+c$ \cite{Re06}. 
 The hypothesis of landing of periodic rays  is implied by the requirement that the {\em postsingular set}
 
\[\PP(f):=\ov{\bigcup_{n\in\N,s\in S(f)}f^n(s)}.\]

 is bounded, but the latter  is in general a much stronger hypothesis. For example, when the map is $f(z)=e^z+c$, having bounded postsingular set implies  non-recurrence of the asymptotic value.
 
Observe also that the statement is most meaningful when thinking of the interplay between singular orbits and  irrationally indifferent cycles. Indeed,  if $\XX$ is an attracting or parabolic cycle, the cycle of its attracting basins  contains a singular value \cite{Mi}, so to each attracting or parabolic cycle is associated trivially a singular orbit in the sense Proposition~\ref{FS1}. Conversely, it is obvious that the singular orbits accumulating on  a Cremer cycle or a cycle of boundaries of Siegel disks  cannot intersect any attracting or parabolic basin. 

Finally we remark that it is not known whether it is possible for boundaries of Siegel disks to contain periodic points or points belonging to  periodic  rays. If this were never the case, the special case at the end of (2) could be removed.

Since our methods are dynamical and do not rely on perturbations in finite-dimensional parameter spaces, we also obtain results for functions with infinitely many singular values. In order to be able to state such results we need some additional understanding of the structure of the dynamical plane for a function $f\in\BBhat$ whose periodic rays land. 

We say that a periodic dynamic ray $G$ (of period $p$)  {\em lands alone } if its landing point is not the landing point of any other dynamic ray (of period $p$). By recent results in \cite{BRG17}, the concept of landing alone is independent of the period, so we will omit it.   For any $p\geq 1$,  consider the closed  graph $\Gamma_p$ formed by rays which are fixed under  $f^p$ and which do not land alone, together with their landing points. The graph $\Gamma_p$ disconnects $\C$ into open unbounded regions, called \emph{basic regions} (for $f^p$).
 
The Separation Theorem in \cite{BF15} (see Section~\ref{sect:Background}, and \cite{GM} for polynomials), states that, even though the number of fixed rays by $f^p$ is infinite, the number of basic regions is finite, and each of them  contains exactly one of the following: either a parabolic basin   invariant under $f^p$; or an attracting  point fixed by $f^p$; or a Siegel   point fixed by $f^p$; or a Cremer   point fixed by $f^p$; or a repelling   point fixed by $f^p$ which is not the landing point of any fixed ray (of $f^p)$.    Following \cite{GM}, the attracting, Siegel, Cremer or repelling fixed point is called an \emph{interior fixed point} (for $f^p$),  and the invariant parabolic basin is called a \emph{virtual fixed point} (for $f^p$).

Our main theorem is the following:

\begin{thm}[Singular orbits trapped in basic regions]\label{Singular orbits trapped in basic regions}
 Let $f$ be an entire transcendental map in class $\BBhat$ whose periodic  rays land.   
Let $\mathcal{X}$ be a cycle of Siegel disks, attracting basins, parabolic basins or Cremer points of period $q$ and let $p$ be any multiple of $q$.  Let $\{B_i\}_{i=0\ldots q-1}$ be the basic regions for $f^p$ containing the elements of $\mathcal{X}$. Then, up to relabeling the indices, at least one of the following is true.
\begin{enumerate}
\item[\rm (1)] There exists a singular value $v$ for $f$ such that $v\in \bigcup_{i=0}^{q-1} B_i$, say $v\in B_0$, and such that $f^{n}(v)\in B_i$ whenever $n\mod q=i$. The orbit of $v$ accumulates either on the non-repelling cycle or on the boundary of the cycle of Siegel disks.

\item[\rm (2)]   There are infinitely many singular values $s_j$ for $f$ in at least one of the basic regions $B_i$, say $B_0$, and  a sequence $ n_j \underset{j\to\infty}{\longrightarrow}  \infty$ such that      $f^{n}(s_j)\in B_i$ whenever   $n\mod q=i$  for all $n\leq n_j $.  The orbits $\{f^{n}(s_j)\}_{j\in\N, n\leq n_j}$ accumulate either    on  the non-repelling interior cycle, or on the boundary of the associated Siegel disk. 
\end{enumerate}
The first case always occurs if $\mathcal{X}$ is attracting or parabolic or if $f$ has only  finitely many singular values. 

Moreover, in case (1), the orbit of $v$ does not accumulate on any other interior periodic cycle or  on any point on  the boundary of a Siegel disk $\Delta\notin \XX$ (provided the point is not on a periodic ray or a periodic point). 

\end{thm}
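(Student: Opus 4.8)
The plan is to associate to $\XX$ a shrinking \emph{pullback} of the basic region $B_0$ and to detect singular values through a Schwarz‑lemma obstruction. First I would fix the labelling $\XX=\{x_0,\dots,x_{q-1}\}$ with $f(x_i)=x_{i+1}$, extended periodically to $i\in\Z$ (writing $x_i$ also for the centre of the Siegel disk around it, when applicable), so that $B_i$ is the basic region of $f^p$ containing $x_i$. Since $f$ commutes with $f^p$ it carries $f^p$‑fixed rays to $f^p$‑fixed rays; since the landing point of a periodic ray is never critical and ``landing alone'' depends only on the landing point \cite{BRG17}, $f$ also preserves the property of not landing alone, so $f(\Gamma_p)\subseteq\Gamma_p$. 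I would then set $U_0:=B_0$ and, inductively, let $U_n$ be the component of $f^{-1}(U_{n-1})$ that contains $x_{-n}$. Using $f(\Gamma_p)\subseteq\Gamma_p$ one checks that $f(U_n)\subseteq U_{n-1}$, that $U_n$ misses $\Gamma_p$ and hence sits inside the single basic region $B_{-n}$, that $U_{(m+1)q}$ is the component of $f^{-q}(U_{mq})$ containing $x_0$, and therefore that $B_0=U_0\supseteq U_q\supseteq U_{2q}\supseteq\cdots$. Each $U_n$ is simply connected (a preimage component of the simply connected $B_0$), so if $U_0,\dots,U_{n-1}$ all avoid $S(f)$ then, since over a simply connected domain disjoint from $S(f)$ every branch of $f^{-1}$ is univalent ($f$ being of bounded type), each $f\colon U_k\to U_{k-1}$, and hence $f^n\colon U_n\to B_0$, is a conformal isomorphism.

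The key step is the resulting dichotomy. Suppose $U_n\cap S(f)=\emptyset$ for all $n\ge N$, and let $M$ be least with $Mq\ge N$. Then for $m\ge M$ the inverse of the conformal isomorphism $f^{(m-M)q}\colon U_{mq}\to U_{Mq}$ is a univalent self‑map of the hyperbolic domain $U_{Mq}$ fixing $x_0$ with multiplier of modulus $|(f^q)'(x_0)|^{-(m-M)}=1$, hence by Schwarz--Pick an automorphism; so $U_{mq}=U_{Mq}$ for all $m\ge M$ and $f^q$ is a conformal automorphism of $U_{Mq}$ fixing $x_0$ with irrational rotation number. Conjugating by a Riemann map, $f^q$ would then be locally linearizable at $x_0$: impossible if $x_0$ is Cremer, and if $x_0$ is the centre of a Siegel disk $\Delta_0$ this forces $U_{Mq}$ to be a rotation domain, i.e.\ $U_{Mq}=\Delta_0$, whence (using connectedness of $U_q,\dots,U_{Mq}$ and $\partial\Delta_0\subseteq J(f)$) $B_0=\Delta_0$, contradicting that basic regions are unbounded and Siegel disks bounded. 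If instead $\XX$ is attracting or parabolic I would argue directly: the immediate basin of $x_0$ lies in $B_0$ and carries a singular value \cite{Mi}, whose entire orbit stays in the cycle of immediate basins $\subseteq\bigcup_iB_i$ and accumulates on $\XX$ --- this is conclusion (1).

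Thus in the Siegel/Cremer case infinitely many $U_n$ meet $S(f)$. For each such $n$ pick $s_n\in U_n\cap S(f)$; then $f^k(s_n)\in U_{n-k}\subseteq B_{-(n-k)\bmod q}$ for $0\le k\le n$, so after the cyclic relabelling that sends $(-n)\bmod q$ to $0$ the singular value $s_n$ lies in $B_0$ with orbit cyclically trapped in the $B_i$'s for $n$ steps. As there are only $q$ relabellings, one of them works along a sequence $n_j\to\infty$. Now either some singular value realises all these trap‑lengths --- hence, by the nesting of the trapping conditions, all of them --- giving conclusion (1); or one needs infinitely many distinct singular values with trap‑lengths tending to infinity, giving conclusion (2). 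When $\#S(f)<\infty$ the first alternative must occur (a nested sequence of nonempty finite sets has nonempty intersection), so (1) holds; together with the attracting/parabolic case this yields the assertion that (1) always holds for attracting, parabolic, or finite‑type data.

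It remains to prove the accumulation statements and the ``moreover'' clause, which I expect to be the hardest part. If $v=s_{n_j}$ has trap‑length $n_j$ then $f^{n_j-mq}(v)\in U_{mq}$ for all $m\le n_j/q$; choosing $n_j$ with $n_j-mq\to\infty$, the orbit of $v$ revisits each $U_{mq}$ at ever later times, so $\omega(v)$ meets $\overline{U_{mq}}$ for every $m$ and hence meets the forward‑$f^q$‑invariant ``core'' $\bigcap_m\overline{U_{mq}}$, which contains $x_0$ and $\overline{\Delta_0}$. Combining this with the classical normal‑families fact \cite{Fa20,Mi} that each point of a Cremer cycle, and each boundary point of a Siegel disk, is a limit of some single singular orbit, with the $f$‑invariance of $\omega(v)$, and with the observation that the orbit of $v$ cannot enter a Siegel disk (or it could not accumulate on its boundary), one gets that $\omega(v)$ contains every element of $\XX$, respectively the whole cycle of Siegel boundaries. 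For the non‑accumulation part: $\omega(v)\subseteq\overline{\bigcup_iB_i}$, and any other interior periodic cycle, any $\partial\Delta$‑point of a Siegel disk $\Delta\notin\XX$, and any rationally invisible repelling point that lies on no periodic ray or periodic point sits in the \emph{interior} of a basic region disjoint from all the $B_i$; passing to $f^{p'}$ for a common multiple $p'$ of $p$ and the relevant period refines $\Gamma_p$ to $\Gamma_{p'}\supseteq\Gamma_p$, and rerunning the pullback/Schwarz analysis at level $p'$ shows that the orbit of $v$, already accumulating on the interior $f^{p'}$‑fixed point $x_0$, is eventually confined to the $f^{p'}$‑basic regions of $\XX$, which are distinct from that region --- so $\omega(v)$ cannot contain the point. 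The genuinely delicate points are verifying that the trapping persists under this refinement and that the orbit singled out by the Schwarz argument --- not merely \emph{some} singular orbit --- is the one accumulating on $\XX$; this is exactly where the structural results of \cite{BF15} on the dynamical plane, and their interplay with the normal‑families description of the local dynamics, are indispensable.
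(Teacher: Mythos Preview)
Your approach is genuinely different from the paper's: you pull back the \emph{entire} basic region $B_0$ and invoke Schwarz--Pick, while the paper pulls back a small simply connected neighbourhood $U$ of a point $w\in\partial\Delta_0\setminus\partial B_0$ and applies Montel's theorem to the inverse branches $\phi_n$, using the Siegel rotation on $U\cap\Delta_0$ (respectively $|\phi_n'(z_0)|=1$ at a Cremer point) to rule out a constant limit and then contradicting the blow--up property of $J(f)$. Your Cremer case is correct and rather elegant.

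The Siegel case, however, has a real gap. If $U_n\cap S(f)=\emptyset$ for all $n\ge N$ but not for all $n\ge 0$, your Schwarz argument correctly yields $U_{Mq}=\Delta_0$ for $Mq>N$, but the step ``whence $B_0=\Delta_0$'' does not follow. Descending, one has $U_n=f(U_{n+1})$ only when $f\colon U_{n+1}\to U_n$ is surjective, and for that you need $U_n\cap S(f)=\emptyset$: a component of $f^{-1}(W)$ can miss asymptotic values lying in $W$ (take $f(z)=e^z$ and $W$ a disk about $0$). So you only obtain $U_n=\Delta_{-n\bmod q}$ for $n\ge N$; for $n<N$ the sets $U_n$ may strictly contain the Siegel disks, and no contradiction arises --- you are left with finitely many singular values with bounded trap lengths, which gives neither (1) nor (2). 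Your hint ``connectedness of $U_q,\dots,U_{Mq}$ and $\partial\Delta_0\subset J(f)$'' does not bridge this, since the $U_n$ are permitted to meet $J(f)$. Note also that Siegel disks of entire transcendental maps are \emph{not} bounded in general (cf.\ Proposition~\ref{Bounded Fatou Components}); the correct contradiction from $B_0=\Delta_0$ would be that $B_0$ contains escaping (hence Julia) points. The paper sidesteps all of this by working at a boundary point $w\notin\Delta_0$ from the outset, so the pullbacks can never collapse into the Fatou set. For the accumulation and the ``moreover'' clause the paper is also more direct: the construction gives $f^{n_j}(s_j)\to w$, so if $s_j=s$ infinitely often then $w\in\omega(s)$; and non--accumulation on another interior cycle $\YY$ of period $\ell$ follows immediately from the Separation Theorem for $f^{\ell q}$, without re--running the pullback. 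Your route through $\bigcap_m\overline{U_{mq}}$ is problematic since these closures are not compact.
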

 
  Theorem~\ref{Singular orbits trapped in basic regions}  implies for example that if $f$ has infinitely many singular values, all but finitely many of which are in an attracting or parabolic basins, then $f$ has at most as many additional non-repelling cycles as the number of 'free' singular values.  
This fact is certainly not surprising but we believe it does not follow directly from the results by Eremenko and Lyubich \cite{EL92}, whose proof  uses perturbation in a finite-dimensional parameter space.  

If all singular values {but finitely many}  are escaping, the situation is not clear. For example,  let $\Delta$ be a bounded Siegel disk and let $\CC_n$ be a sequence of finite  coverings of $\partial\Delta$ by balls of radius $1/n\ra0$. Then if we have infinitely many escaping singular values $\{s_j\}_{j\in\N}$ we can make the singular orbit of $s_j$ visit all balls in $\CC_j$ before escaping infinity.

We believe that our methods work also when $f\in\BB\setminus\BBhat$. In that case the role of dynamic rays is taken by analogous, non-pathconnected objects called \emph{dreadlocks} \cite{BRG17}. One would need to prove the separation theorem using dreadlocks instead of rays and assume that periodic dreadlocks land. Such extension would remove the function theoretical assumption on $f$.
 
 The paper is organized as follows. Section 2 contains some of the background results that will be used throughout the paper. Section 3 is aimed at proving the main result (Theorem \ref{Singular orbits trapped in basic regions}) and some corollaries including Proposition \ref{FS1}. 

\subsection*{Acknowledgements} We wish to thank Lasse Rempe-Gillen and Mitsuhiro Shishikura for helpful discussions. We are grateful to the Institut de Matem\`atica de la Universitat de Barcelona (IMUB) for its hospitality.

\section{Background}\label{sect:Background}
Let $f$ be an entire transcendental function in class $\BB$ and let $D$ be a disk  containing $S(f)$.  The connected components of $f^{-1}(\C\setminus \ov{D})$ are called   \emph{tracts} and they are  unbounded and simply connected. By definition for any tract $T$ we have that $f:T\ra\C\setminus \ov{D}$ is an unbranched covering of infinite degree. Let $\TT$ denote the union of all tracts. 
One can easily show that there exists a (piecewise analytic) curve $\delta\subset \C\setminus(\ov{D\cup\TT})$ connecting $\delta$ to $\infty$ \cite{Rottenfusserthesis}. The preimages of $\C\setminus (\ov{D}\cup\delta)$ are called \emph{fundamental domains} and, since fibers are discrete, it follows that only finitely many tracts and fundamental domains intersect $D$.  For any fundamental domain $F$ we have that 
\[
f:F\ra \C\setminus (\ov{D}\cup\delta)
\]
 is a biholomorphism. We refer to \cite{EL92}, \cite{RRRS} and \cite{BF15} for details and other properties.  
 
Functions in class $\BB$ have expansive properties near infinity inside fundamental domains, as shown in the following lemma.  
 
\begin{lem}[{\cite[Proposition 2.6]{BF15}}]
\label{lem:cutting}Let $\FF=\{F_\alpha\}_{\alpha=1\ldots N}$ be a finite collection of fundamental domains. Then for any $R$ large enough  there exists an (analytic) Jordan curve $\gamma$ in $\{|z| > R\}$ such that    the preimages $\gamma_\alpha\subset F_\alpha$ of $\gamma$ are contained in the bounded connected component of $\C\setminus \gamma$.
 \end{lem}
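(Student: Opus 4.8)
The plan is to exploit the biholomorphic structure $f:F_\alpha\ra\C\setminus(\ov D\cup\delta)$ on each fundamental domain together with the logarithmic change of coordinates that is standard for class $\BB$. First I would pass to logarithmic coordinates: writing $\C\setminus\ov D$ as the image of a right half-plane $\{\Re w>\log r\}$ (where $D=\{|z|<r\}$) under the exponential, each tract $T$ lifts to a ``logarithmic tract'' $\mathcal T$ on which $f$ conjugates to a map $F:\mathcal T\ra\{\Re w>\log r\}$ that is a biholomorphism onto its image, and each fundamental domain $F_\alpha$ lifts to a strip-like region $\mathcal F_\alpha$ on which $F$ is univalent onto the half-plane minus a lift of $\delta$. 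The Eremenko--Lyubich expansion estimate then gives $|F'(w)|\geq c|F(w)|$ (equivalently a uniform lower bound $\geq 2$, say, once $\Re w$ is large), which is the source of the contraction of the inverse branches.

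Next I would take $\gamma$ to be (the image under $\exp$ of) a vertical segment capped off, or more simply a round circle $\{|z|=R'\}$ for a suitable large $R'$; the point is that $\gamma$ should be a Jordan curve surrounding $\ov D\cup\delta$ and lying in $\{|z|>R\}$. For each of the finitely many $\alpha$, the preimage $\gamma_\alpha=f^{-1}(\gamma)\cap F_\alpha$ is a single arc (or Jordan curve) because $f|_{F_\alpha}$ is injective; by the expansion estimate, the inverse branch $(f|_{F_\alpha})^{-1}$ is a uniform contraction on the region outside a large disk, so $\gamma_\alpha$ is pulled much closer to... well, not to a point, but one shows it lies inside a disk of radius comparable to $R'$ divided by the expansion factor. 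The key quantitative claim is: if $R'$ is chosen large enough (depending on $\FF$), then every $\gamma_\alpha$ lies in the bounded component of $\C\setminus\gamma$, i.e. inside $\{|z|<R'\}$. To see this, note a point $z\in\gamma_\alpha$ satisfies $|f(z)|=R'$; if $z$ were outside $\{|z|<R'\}$ one could integrate the expansion estimate along the segment in $F_\alpha$ from $z$ back toward where $F_\alpha$ meets $\ov D$ and derive $|f(z)|$ much larger than $R'$, a contradiction — this is where finiteness of $\FF$ enters, to get uniform constants. Then set $R$ itself to be whatever lower bound on $\{|z|=R'\}$ is needed, and $\gamma=\{|z|=R'\}$ works, after possibly smoothing to make it an analytic Jordan curve (a circle already is).

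The main obstacle I expect is making the ``$\gamma_\alpha$ lies inside $\gamma$'' claim precise and uniform across all $\alpha$ simultaneously: one needs a single radius $R'$ that works for the whole finite collection, and one needs to control the geometry of each fundamental domain near $\ov D$ (each $F_\alpha$ does meet a neighborhood of $D$, so the inverse branch is defined on a region reaching in toward $D$, which is exactly what lets the expansion estimate ``integrate up'' to a contradiction). A secondary technical point is that $\gamma$ must avoid $\delta$ and be positioned so that $\gamma\subset\C\setminus(\ov D\cup\delta)$ is genuinely in the range of every $f|_{F_\alpha}$; since that range is $\C\setminus(\ov D\cup\delta)$ and $\delta$ is a curve to $\infty$ disjoint from $\ov D$, a large circle is not contained in $\C\setminus\delta$, so one really should take $\gamma$ to be a large circle with a slit along $\delta$, or equivalently work with the preimage of the half-plane boundary in logarithmic coordinates and push back down; handling this slit carefully, while keeping $\gamma$ a Jordan curve, is the fiddly part. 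Modulo these uniformity and bookkeeping issues, the argument is a direct application of the logarithmic-coordinate expansion of Eremenko--Lyubich.
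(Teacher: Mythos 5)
The paper does not actually prove this lemma: it is imported verbatim from \cite[Proposition 2.6]{BF15}, so there is no in-paper argument to compare against. Your plan follows what is essentially the argument used there, namely the logarithmic change of variables and the Eremenko--Lyubich expansion estimate $|F'(w)|\geq\tfrac{1}{4\pi}\bigl(\Re F(w)-Q\bigr)$, so that the inverse branches contract and pull the preimage of a large circle inward; the strategy is the right one. Your worry about $\gamma$ meeting $\delta$ is also a non-issue: one may take $\gamma$ to be an honest circle, and $\gamma_\alpha$ is then an arc crossing $F_\alpha$ from one boundary preimage of $\delta$ to the next --- which is exactly how these ``arcs'' are used later when cutting the basic regions.

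Two steps do need repair. First, the parenthetical claim that ``each $F_\alpha$ does meet a neighborhood of $D$'' is false: only finitely many tracts and fundamental domains intersect $D$, and a fundamental domain far out in the cyclic order lies entirely at large modulus. What is true is that $\partial F_\alpha$ contains an arc mapping onto $\partial D$ (the ``root'' of $F_\alpha$), and it is the finiteness of $\FF$ --- not uniformity of the expansion constant, which already holds for all tracts simultaneously --- that puts all these roots inside one fixed disk $\{|z|<M\}$; this is the genuine place where finiteness enters. Second, one cannot ``integrate the expansion estimate along the segment in $F_\alpha$ from $z$ back toward $\ov D$'', because there is no reason a straight or radial segment stays inside $F_\alpha$. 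The integration must be done on the image side: take a path in the half-plane from $\{\Re\zeta=\log R'\}$ back to $\{\Re\zeta=Q+O(1)\}$ inside the image of the fundamental domain, and pull it back by the inverse branch using $|(F^{-1})'(\zeta)|\leq 4\pi/(\Re\zeta-Q)$. This bounds the logarithmic-coordinate distance from $\gamma_\alpha$ to the root of $F_\alpha$ by $O(\log\log R')$, hence $\sup_{z\in\gamma_\alpha}|z|\leq M\,(\log R')^{4\pi}e^{O(1)}<R'$ once $R'$ is large, uniformly over the finitely many $\alpha$. With these corrections your outline closes up.
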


Fundamental domains are endowed with a natural cyclic order, and can be used to define symbolic dynamics on the set of escaping points (or at least, those which stay far enough from  $D$ if the postsingular set is not bounded). This allows to define sets of escaping  points which share the same itinerary. If $f\in \BBhat$, it is shown in \cite{RRRS} that the tracts have a nice enough geometry to prove that these sets of escaping points are injective curves, called dynamic rays, and that each escaping point belongs to a dynamic ray or a preimage thereof. More precisely, let $\Sigma$ be the set of infinite sequences whose symbols are the fundamental domains of $f$,  and let $\sigma$ be the left-sided shift map acting on $\Sigma$. The elements of $\Sigma$ are called {\emph addresses}. An address is {\emph bounded} if it takes values over a finite family of fundamental domains, and \emph{periodic} if it is a periodic sequence.  The following statement summarizes the relation between these objects. 
%

\begin{thm}[\cite{RRRS}]Let $\hat\BB  \subset \BB$ be the class of maps formed by finite compositions of finite order maps in $\BB$.  Then there exists $\NN\subset\Sigma$ such that  the if $z\in I(f)$, then for $n$ large enough $f^n(z)$ belongs to an injective unbounded  curve $G_\s\subset I(f)$ called the \emph{dynamic ray} of address $\s$ for some $\s\in \NN$. The correspondence $\s\mapsto G_\s$ is injective  and $G_\s\cap G_{\tilde{s}}=\emptyset$ for $\s\neq\tilde{s}$. Dynamic rays  satisfy the relation  
\[
f(G_\s)=G_{\sigma\s}.
\]
\end{thm}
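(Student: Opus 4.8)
The plan is to prove the structure theorem about dynamic rays for maps in $\hat\BB$ by combining the symbolic dynamics on the set of escaping points with the tract geometry results of \cite{RRRS}. The statement being proved is Theorem~[\cite{RRRS}] above, stating that there is a set $\NN\subset\Sigma$ of admissible addresses such that every escaping point eventually lands on a dynamic ray $G_\s$, the rays are pairwise disjoint, injective, unbounded curves, and $f(G_\s)=G_{\sigma\s}$.

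\begin{proof}[Proof sketch]
\textbf{Step 1: Symbolic coding of escaping orbits.} Fix a large disk $D\supset S(f)$ and a curve $\delta$ as in Section~\ref{sect:Background}, so that $\C\setminus(\ov D\cup\delta)$ is simply connected and its preimages are the fundamental domains $\{F_\alpha\}$. Given $z\in I(f)$, for $n$ large enough $f^n(z)$ stays outside $\ov D$ and outside $\delta$ (by the escaping property and, if $\PP(f)$ is unbounded, after possibly enlarging $D$ and choosing $\delta$ appropriately), so $f^n(z)$ lies in a well-defined fundamental domain $F_{\s_n}$. This assigns to (a tail of) the orbit of $z$ an address $\s=\s_0\s_1\s_2\cdots\in\Sigma$, and by construction $f$ shifts the address: the address of $f(z)$ is $\sigma\s$. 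Let $\NN\subset\Sigma$ be the set of addresses that actually arise this way.

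\textbf{Step 2: Rays as shared-itinerary sets via the tract geometry.} For $\s\in\NN$ let $J_\s$ be the set of points whose forward orbit has address (a shift of) $\s$ and which escape. The key input from \cite{RRRS} is that for maps in $\hat\BB$ the tracts satisfy a uniform head-start / bounded-wiggling geometric condition; combined with the expansion Lemma~\ref{lem:cutting} (here applied along the sequence of fundamental domains $F_{\s_0},F_{\s_1},\dots$) this forces $J_\s$ to be parametrizable as an injective curve $G_\s\colon(0,\infty)\to\C$ with $|G_\s(t)|\to\infty$. Concretely, one pulls back the curves $\gamma$ of Lemma~\ref{lem:cutting} through the branches prescribed by $\s$ to obtain a nested sequence of arcs whose diameters shrink at the relevant scale, producing the ray as a uniform limit; injectivity and unboundedness come from the expansion estimates. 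This is the technical heart and the main obstacle: making the pullback estimates uniform along an arbitrary (possibly unbounded) address requires the full strength of the geometric hypotheses of \cite{RRRS}, and is the step I expect to be hardest to reproduce cleanly.

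\textbf{Step 3: Injectivity of $\s\mapsto G_\s$ and disjointness.} If $\s\neq\tilde\s$, they differ in some symbol $\s_k\neq\tilde\s_k$; since $F_{\s_k}$ and $F_{\tilde\s_k}$ are disjoint open sets and $f^k$ maps $G_\s$, resp.\ $G_{\tilde\s}$, into them (away from the finitely many symbols that can meet $D$, which one handles separately), we get $f^k(G_\s)\cap f^k(G_{\tilde\s})=\emptyset$, hence $G_\s\cap G_{\tilde\s}=\emptyset$ and in particular $G_\s\neq G_{\tilde\s}$.

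\textbf{Step 4: The functional equation.} Since $f$ shifts addresses (Step 1) and maps each fundamental domain biholomorphically onto $\C\setminus(\ov D\cup\delta)$, it maps the shared-itinerary set $J_\s$ onto $J_{\sigma\s}$; combined with the curve structure from Step 2 and the uniqueness of the parametrization, this gives $f(G_\s)=G_{\sigma\s}$.

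\textbf{Step 5: Every escaping point lands on a ray.} By Step 1 every $z\in I(f)$ has a tail with a well-defined address $\s\in\NN$, so for $n$ large $f^n(z)\in J_\s=G_\s$ by Step 2; thus $z$ itself lies on $G_\s$ or on one of its iterated preimages. This completes the proof.
\end{proof}

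The one genuinely delicate point, as noted, is Step 2: the passage from a combinatorial address to an honest injective unbounded curve with controlled geometry. Everything else (coding, shift-equivariance, disjointness) is soft once that is in place, and the paper is entitled to cite \cite{RRRS} for the curve-existence statement rather than redo the estimates.
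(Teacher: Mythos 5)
This statement is not proved in the paper at all: it is quoted verbatim as background from \cite{RRRS}, so there is no in-paper argument to compare yours against. Your sketch does correctly reproduce the architecture of the original proof in \cite{RRRS} --- symbolic coding of escaping orbits by fundamental domains, shift-equivariance of the coding, disjointness of rays from disjointness of the fundamental domains, and the functional equation $f(G_\s)=G_{\sigma\s}$ as a formal consequence of the coding. Steps 1, 3 and 4 are indeed ``soft'' in the sense you describe.

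However, as a proof the proposal has a genuine gap, and it is exactly the one you flag: Step 2 is the entire content of the theorem and is only gestured at. The passage from ``set of escaping points sharing an itinerary'' to ``injective unbounded curve'' requires the specific geometric input of \cite{RRRS} --- that tracts of finite-order maps (and their compositions) have \emph{bounded wiggling}, hence satisfy a \emph{linear head-start condition}, which linearly orders points of common address by speed of escape and makes each such set an arc. Lemma~\ref{lem:cutting} alone is far too weak to substitute for this; it gives expansion near infinity but no control on the geometry of pullbacks along an arbitrary unbounded address. Two smaller inaccuracies: (i) in Step 5 you assert $J_\s=G_\s$, i.e.\ that the full set of escaping points of address $\s$ is the ray; \cite{RRRS} proves the weaker (and correctly stated) claim that $f^n(z)$ lies on a ray for $n$ large, and the identification of $J_\s$ with a single arc is not automatic; (ii) the set $\NN$ of realized addresses is not simply ``whatever arises from escaping orbits'' --- its characterization is a separate matter, as the paper itself notes. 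Since the paper is entitled to cite \cite{RRRS} here, none of this affects the paper; but as a blind reconstruction your argument establishes only the combinatorial shell, not the theorem.
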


A dynamic ray $G_\s$ is \emph{periodic} if $\s$ is periodic, and has \emph{ bounded address} if $\s$ is bounded. The set $\NN$ of addresses  which are realized  depends on  the class of $f$ as defined in \cite{EL92,Re09}, but $\NN$ always contains the set of bounded addresses \cite{Re08,BK07} and addresses which are exponentially bounded in the sense of \cite{BRG17}. For the exponential family $\NN$ is completely characterized \cite{SZEsc}. For more on the characterization of $\NN$ see \cite{ABRG}.

The initial idea of finding curves in the escaping set goes back to \cite{DT86,DK,BK07}.
For functions with not as beautiful a  geometry as functions in class $\BBhat$, the role of dynamic rays is played by more general connected sets of escaping points called \emph{dreadlocks} \cite{BRG17}.
  
We say that an unbounded set $X$ is \emph{asymptotically contained}   in another unbounded set $U$ if and only if there exists $R$ such that $X\cap\{z\in\C: |z|>R\}\subset U$.
It is not hard to see that a ray $G_\s$ of address $\s=F_0F_1\ldots$ is asymptotically contained in the  fundamental domain $F_0$. Also, for each fundamental domain $F$ there exists a unique fixed ray with address $\ov{F}$ (see \cite{Re08}, \cite{BK07} and \cite[Lemma 2.3]{BF15}) and which is asymptotically contained in $F$. 
 
 The proof of Theorem~\ref{Singular orbits trapped in basic regions} uses strongly the following theorem.
 
\begin{thm}[Separation Theorem \cite{BF15}]\label{Separation Theorem Entire} Let  $p\geq 1$ and $f\in\BBhat$ and assume  that all fixed rays of $f^p$ land. Then there are finitely many basic regions for $f^p$, and each basic region  contains exactly one interior fixed point or virtual fixed point of $f^p$.
\end{thm}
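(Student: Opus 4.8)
The plan is to prove the Separation Theorem by mimicking, in the transcendental setting, the argument of Goldberg–Milnor \cite{GM} for polynomials, but replacing the finite graph of external rays with the (a priori infinite) graph $\Gamma_p$ of fixed rays of $f^p$ that do not land alone, together with their landing points. First I would establish that $\Gamma_p$ is a \emph{closed} subset of $\C$: the only delicate point is that infinitely many fixed rays could accumulate somewhere, but since each fixed ray of $f^p$ is asymptotically contained in one of the (finitely many) fundamental domains of $f^p$, and two distinct rays in the same fundamental domain are disjoint and separated near infinity by Lemma~\ref{lem:cutting}, accumulation of rays can only occur on bounded pieces; a normality / no-wandering argument (together with the landing hypothesis) shows that the closure adds no new points beyond the stated landing points. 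This yields that the complement $\C\setminus\Gamma_p$ is open, and I would call its connected components the \emph{basic regions} for $f^p$. Each basic region is unbounded because $\Gamma_p$ is contained in a countable union of rays plus a discrete set of landing points, hence cannot separate $\C$ into bounded pieces near infinity.

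Next I would show that $f^p$ maps basic regions to basic regions. Since $f(G_{\overline{F}}) = G_{\overline{F}}$ for each fixed ray of $f^p$ and $f^p$ permutes the landing points accordingly, $f^p(\Gamma_p) \subseteq \Gamma_p$ and more precisely $f^p$ induces a map on the set of basic regions. The key combinatorial finiteness input is the Separation-type counting: I would argue that the number of basic regions is finite by showing that each basic region must contain at least one fixed point or parabolic basin of $f^p$ (hence, since $f^p$ has at least one fixed ray in each fundamental domain but only finitely many non-repelling cycles plus finitely many fundamental domains, the count is controlled). The heart of this is a fixed-point index argument à la Goldberg–Milnor: restricting $f^p$ to the closure of a basic region $B$ and using the fact that $\partial B \subset \Gamma_p$ consists of rays and landing points fixed by $f^p$, one sets up a ``pseudo-boundary'' isotopic modification of $\partial B$ (pushing the boundary slightly into $B$ using the expansivity near infinity from Lemma~\ref{lem:cutting} to truncate the rays at large radius) to obtain a topological disk-like region on which $f^p$ acts, and then an index computation forces a fixed point or a boundary fixed point of parabolic type inside. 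Conversely, two distinct non-repelling cycles, or a non-repelling cycle and a rationally invisible repelling point, cannot lie in the same basic region: if they did, one could find a fixed ray of $f^p$ separating them (using density of fixed rays' landing points among rationally visible repelling points and a Riemann–Hurwitz / planar separation argument), contradicting that $B$ is a single component of $\C\setminus\Gamma_p$.

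The main obstacle I expect is precisely this last separation step together with the finiteness of the number of basic regions: in the polynomial case one uses that the external rays are parametrized by angles on a circle and the relevant combinatorics (rational angles, the doubling map, orbit portraits) is completely explicit, whereas here the address space $\Sigma$ is infinite-dimensional and the geometry of tracts is only controlled near infinity. Making rigorous the claim that ``enough'' fixed rays land at rationally visible repelling points to separate any two interior objects — and that only finitely many basic regions arise — requires combining the landing hypothesis with the expansivity of Lemma~\ref{lem:cutting}, a normal-families argument to rule out infinitely many basic regions accumulating, and the local theory of non-repelling fixed points (snail lemma for the parabolic/Cremer cases, and the structure of Siegel boundaries) to pin down exactly one interior or virtual fixed point per region. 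I would organize the proof so that the finiteness is deduced \emph{after} the ``at least one interior fixed point per basic region'' statement, since each fixed point of $f^p$ of non-repelling type, or each parabolic cycle, accounts for at most boundedly many basic regions and there are only finitely many such objects for an entire map of bounded type with the given ray-landing property.
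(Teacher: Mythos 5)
This statement is not proved in the paper at all: it is quoted verbatim as background from \cite{BF15}, so there is no internal proof to compare against, and any assessment has to be of your sketch on its own terms against the argument of \cite{BF15}. Your overall strategy --- adapt Goldberg--Milnor by forming the graph $\Gamma_p$ of fixed rays that do not land alone, truncate near infinity using the expansivity of Lemma~\ref{lem:cutting}, and run a fixed-point index computation on the resulting regions --- is indeed the route taken in \cite{BF15}. But what you have written is a plan, not a proof, and the two steps you yourself flag as ``the main obstacle'' are exactly the substance of the theorem; as presented they contain genuine gaps.

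First, your proposed order of deduction for finiteness is circular in a way that cannot be repaired as stated. You want to get finiteness of the set of basic regions from ``each basic region contains at least one interior or virtual fixed point'' plus the finiteness of the non-repelling cycles. But an interior fixed point of $f^p$ may be a \emph{repelling} point that is not the landing point of any fixed ray (a rationally invisible point, in the paper's terminology), and $f^p$ has infinitely many repelling fixed points; nothing in your argument excludes infinitely many of them being rationally invisible and hence infinitely many basic regions. The finiteness in \cite{BF15} comes instead from showing that only finitely many fixed rays of $f^p$ fail to land alone, i.e.\ that $\Gamma_p$ is a finite graph of ray pairs; this is itself an index/counting statement and is logically prior to, not a consequence of, the per-region fixed point count. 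Second, your uniqueness argument (``if two interior objects lay in the same basic region one could find a fixed ray separating them, using density of landing points of fixed rays among rationally visible repelling points'') does not work: the fixed rays of $f^p$ land at a specific, possibly sparse set of repelling fixed points, there is no density statement available at this level of generality, and again two rationally invisible repelling fixed points are by definition not separated by any such ray unless the index computation forces it. In \cite{BF15}, as in Goldberg--Milnor, uniqueness is extracted from the same Lefschetz-type index bookkeeping that gives existence, not from a separate separation-by-density argument. Finally, your claim that each complementary component of $\Gamma_p$ is unbounded ``because $\Gamma_p$ is a countable union of rays plus a discrete set of landing points'' is not a proof --- a pair of rays landing at the same point is a closed curve through infinity and one must argue via the structure of ray pairs, not cardinality. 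So the skeleton is right, but the theorem's actual content is in the parts you have deferred.
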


Theorem~\ref{Separation Theorem Entire} was inspired by an analogous Separation Theorem by Goldberg and Milnor \cite{GM} for polynomials {\em with connected Julia sets}, a condition equivalent to requiring that the postcritical set is bounded and which implies that all periodic rays land. So in the transcendental setting, the assumption that all periodic rays land is weaker than than the hypothesis that the postsingular set is bounded. 

Goldberg-Milnor's Separation Theorem and Theorem~\ref{Separation Theorem Entire} have many corollaries, including that parabolic points are always landing points of periodic dynamic rays, and that  hidden components of a Siegel disk are preperiodic to the Siegel disk itself (see \cite{CR} and \cite{BF2} for an application of this fact to the existence of critical points on the boundary of Siegel disks). See also \cite{Ki00}.

\section{Singular Values and Basic Regions}\label{proof}

In this section we prove Theorem~\ref{Singular orbits trapped in basic regions} and  Proposition~\ref{FS1}.
Consider an entire transcendental map $f\in\BBt$ whose periodic rays land, and consider the basic regions for $f^p$ for some $p\in\N$. 
By the Separation Theorem  \ref{Separation Theorem Entire}, each basic region contains exactly one interior fixed point for $f^p$ or an attracting parabolic basin  fixed under $f^p$. 

We shall make a further distinction between different types of basic regions. See Figure \ref{fig:basicregions}.

\begin{defn}[Basic regions of transcendental and polynomial type]
 A basic region $B$ is called of \emph{polynomial type} if $B\cap (\C\setminus\TT)$ is bounded and of \emph{transcendental type} otherwise. 
\end{defn}

\begin{figure}[htb!]
\centering
\includegraphics[width=0.6\textwidth]{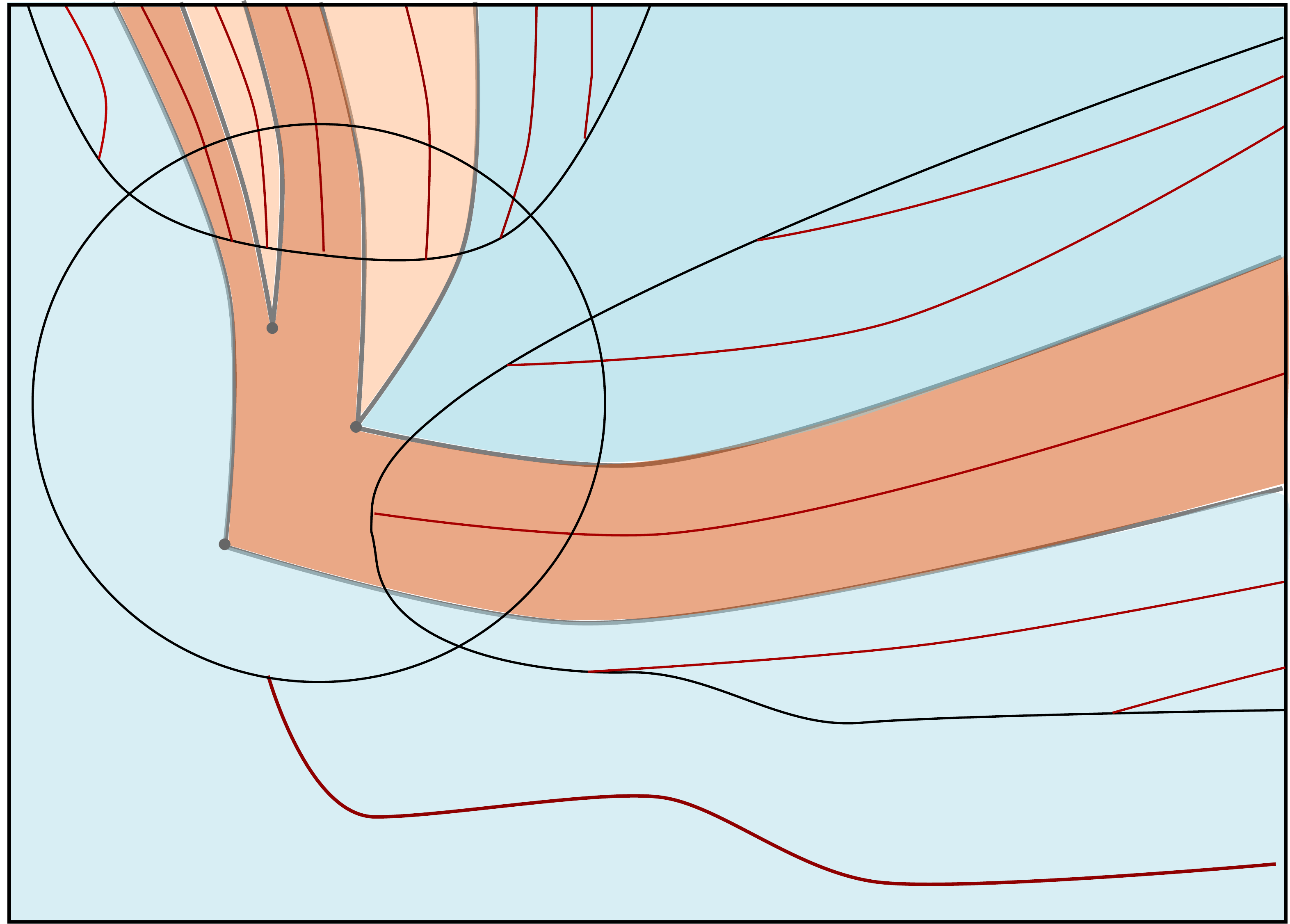}
\setlength{\unitlength}{0.6\textwidth}
\put(-0.93,0.3){$D$}
\put(-0.78,0.1){$\delta$}
\put(-0.35,0.58){$T_1$}
\put(-0.52,0.62){$T_2$}
\caption{\small Basic regions of polynomial type (orange) and transcendental type (blue). Fixed rays and their landing points are shown in grey. In red, the curve $\delta$ and its preimages, which, together with the track boundaries (black) bound the fundamental domains.}
\label{fig:basicregions}
\end{figure}

The following proposition is not surprising, and we will not need it  in the sequel, but we include it because it illustrates  the fact that transcendental behaviour, as for example the presence of unbounded Fatou components, only appears associated to basic regions of transcendental type.

\begin{prop}[Bounded Fatou Components]\label{Bounded Fatou Components}
 Let $Q$ be  a  periodic Siegel disk,  a periodic  attracting basin,  or a periodic parabolic basin of period $q$ which is contained in a basic region $B$ of polynomial type for $f^p$,  with $p$ multiple of $q$. Then $Q$ is bounded, and either  $\partial Q\cap \partial B$ is empty or  it is contained in the set of  boundary fixed points. 
\end{prop}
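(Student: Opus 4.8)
The plan is to exploit the fact that a basic region $B$ of polynomial type is, near infinity, contained in the union $\TT$ of tracts, and to combine this with the expansion property of $f$ inside tracts/fundamental domains (Lemma~\ref{lem:cutting}) to force any invariant Fatou component inside $B$ to stay in a bounded part of the plane.

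\medskip

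\textbf{Boundedness of $Q$.} First I would reduce to the case $p=q$ by replacing $f$ with $f^p$ and noting that $Q$ is $f^p$-invariant (being a periodic component of period $q$, with $q\mid p$) and that basic regions for $f^p$ are preserved. So assume $Q$ is fixed by $g:=f^p$ and lies in a fixed basic region $B$ of polynomial type. By definition $B\cap(\C\setminus\TT)$ is bounded, hence there is $R_0$ so that $B\cap\{|z|>R_0\}\subset\TT$; moreover only finitely many tracts meet $\{|z|\le R_0\}$ and the boundary $\partial B$ is made of fixed rays together with their landing points, each fixed ray being asymptotically contained in a fundamental domain, hence in a single tract. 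Thus the part of $B$ outside a large disk is contained in a \emph{finite} union of tracts, say in a finite union $\FF$ of fundamental domains $F_\alpha$. Now apply Lemma~\ref{lem:cutting} to this finite family $\FF$: for $R$ large there is a Jordan curve $\gamma\subset\{|z|>R\}$ whose preimages $\gamma_\alpha\subset F_\alpha$ lie in the bounded component of $\C\setminus\gamma$. This says precisely that $g$ maps the ``far part'' of each $F_\alpha$ strictly inside the disk bounded by $\gamma$; iterating, one sees that any point of $B$ starting far enough out is pushed towards infinity under $g$ (its orbit eventually leaves every tract in $\FF$ through the unbounded side, or escapes) — in any case such a point cannot lie in the bounded, $g$-invariant set $Q$, because points of a Siegel disk are recurrent and points of an attracting or parabolic basin converge to the (finite) attracting/parabolic point. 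Hence $Q\subset\{|z|\le R\}$ for suitable $R$, i.e. $Q$ is bounded. (The attracting/parabolic case can be argued even more directly: the attracting or parabolic fixed point lies in $\C$ and the immediate basin is contained in $B$; the Koenigs/Leau linearization and the expansion estimate above are incompatible unless the whole component is bounded.)

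\medskip

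\textbf{The boundary statement.} For the claim about $\partial Q\cap\partial B$, recall that $\partial B$ consists of fixed rays of $g$ (rays not landing alone) together with their landing points. A fixed ray $G$ is an injective curve in the escaping set $I(f)$, while $\ov Q$ is disjoint from $I(f)$; hence $\partial Q$ cannot meet the open ray $G$ itself, so $\partial Q\cap\partial B$ is contained in the set of landing points of fixed rays of $g$, which are exactly the boundary fixed points of $B$. This is the content of the proposition, and it is essentially immediate once one has identified $\partial B$ correctly.

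\medskip

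\textbf{Main obstacle.} The delicate point is making rigorous the ``pushed towards infinity'' step: one must control the global behaviour of the $g$-orbit of a point that starts in the far part of $B$, using that (i) $B\setminus D$ consists of finitely many fundamental domains, (ii) on each of these Lemma~\ref{lem:cutting} gives the contraction-towards-the-disk property for preimages of $\gamma$, hence expansion in the forward direction, and (iii) $\partial B$ is $g$-invariant so the orbit stays in $\ov B$. One has to argue that the orbit either escapes or re-enters the bounded part $B\cap(\C\setminus\TT)$ — and in the latter case one can restart the argument — and that it cannot oscillate forever between tracts while staying bounded, which again follows from the uniform expansion estimate of Lemma~\ref{lem:cutting} applied to the finite family $\FF$. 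Once this trapping/escaping dichotomy is established, incompatibility with $Q$ being bounded and (forward) invariant with recurrent or convergent dynamics finishes the proof.
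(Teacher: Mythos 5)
Your setup coincides with the paper's: reduce to an invariant component, observe that a polynomial-type basic region meets only finitely many fundamental domains, invoke Lemma~\ref{lem:cutting}, and cut $B$ along the arcs $\gamma_\alpha$. The divergence, and the gap, is in how the contradiction with unboundedness is extracted. You want a dynamical trapping/escaping dichotomy: a point of $Q$ in the far part of $B$ has its whole orbit pushed outward, which you then declare incompatible with recurrence (Siegel) or convergence (attracting/parabolic). You yourself flag this as the delicate step, but the proposed completion does not work in the Siegel case: what your expansion argument would give (at best) is that the orbit of a far-out point of $\Delta$ stays outside $\gamma$ forever, and this is \emph{not} contradicted by recurrence --- the orbit closure of such a point is a compact $f$-invariant curve $\phi(\partial \D_r)$ which could a priori lie entirely in the far region. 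Also, the blanket claim that ``any point of $B$ starting far enough out is pushed towards infinity'' overstates Lemma~\ref{lem:cutting}, which only locates the preimages of the single curve $\gamma$; the image of a far-out point can land anywhere outside $\gamma$, and controlling where it lands requires using the invariance of $Q$, not of $B$.

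The paper avoids orbit-tracking altogether. It uses the invariant exhaustion $V_r=\phi(\D_r)$ of the Siegel disk: if $\Delta\not\subset\hat B$, take $r_0=\sup\{r: V_r\subset \hat B\}<1$; then $\overline{V_{r_0}}$ is a forward-invariant connected set contained in $\overline{\hat B}$ that touches some cut $\gamma_\alpha$, and a \emph{single} application of $f$ gives $f(\gamma_\alpha)=\gamma\subset\C\setminus\overline{\hat B}$ while $f(\overline{V_{r_0}})\subset\overline{\hat B}$ --- an immediate contradiction. This one-step argument is what replaces your iteration, and it handles the Siegel case where recurrence alone is insufficient. Finally, in the boundary statement your assertion that $\overline Q$ is disjoint from $I(f)$ is not free: it is exactly where you must use that $\partial Q$ is forward invariant \emph{and} (by the part just proved) bounded; the paper explicitly warns that boundaries of unbounded invariant components can contain rays, so this cannot be taken as known before boundedness is established.
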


\begin{proof}
Without loss of generality we can assume that the period is 1 and that $B$ is a basic region for $f$.  
Let $z_0$ be the attracting, indifferent or parabolic interior fixed point for $B$. We will only prove the case in which $z_0$ is the center of a Siegel Disk $\De$, since the other two cases are very similar.
 
 Since $B$ is of polynomial type and rays are asymptotically contained in fundamental domains, $B$ intersect finitely many fundamental domains. Indeed, the boundary of $B$ is made of finitely many rays pairs union the point $\infty$. While moving along the boundary of $B$ counterclockwise (passing trough infinity when moving from one ray pair to the other), any two consecutive rays which do not land together belong to the same tract (otherwise $B\setminus \TT$ would be unbounded), hence there are only finitely many fundamental domains between them. Since $\partial B$ contains finitely many ray pairs and fundamental domains have a cyclic order, $B$ intersect finitely many fundamental domains.
 
Let $\gamma$,  $\gamma_\alpha$ be as in  Lemma~\ref{lem:cutting}. Consider the region $\Bhat$ obtained by cutting $B$ with the arcs $\gamma_\alpha$. By choice of $\gamma_\alpha$, $\hat{B}$ is contained in the bounded connected component of $\C\setminus \gamma$, hence  $f(\gamma_\alpha)\cap \hat{B}=\emptyset$. Let us  start by showing that $\De\subset\Bhat$.

Let $\phi:\D\ra\De$ be the Riemann map conjugating $f$ to a rotation, and for all $r<1$ let $V_r:=\phi(\D_r)$. 
Let $r_0:=\sup\{r: V_r\subset\Bhat\}$. Observe that $\De\subset \Bhat$ if and only if $r_0=1$. Suppose that $r_0<1$. Since $\De$ is fully contained in $B$, we would have that $\partial \phi(V_{r_0})\cap \gamma_\alpha\neq\emptyset$. But since $\ov{V_{r_0}}\subset \ov{\Bhat}$ is forward invariant, we have that $f(\ov{V_{r_0}})\subset \ov{\Bhat}$, while   $f(\gamma_\alpha)\subset\C\setminus\ov{\Bhat}$ by choice of $\gamma_\alpha$, this gives a contradiction.
 So $r_0=1$, $\De$ is fully contained in $\Bhat$ and  in particular it is bounded. Since it is bounded its boundary cannot contain escaping points, and since it is contained in $\Bhat$ and forward invariant it cannot intersect $r_\alpha$ (again because $f(r_\alpha)\subset\C\setminus\Bhat$). 
  So $\partial\De\cap \partial B$ contains at most the boundary fixed points.  
\end{proof}
We observe that one could prove Proposition~\ref{Bounded Fatou Components} also by using weakly polynomial-like maps.  
{\begin{rem} It seems to be a difficult problem to know whether there can be (repelling) periodic points on the boundaries of Siegel disks. For polynomials, it is known that if such a case occurs, then the boundary of the Siegel Disk is an indecomposable continuum. By P\'erez-Marco \cite{PM}, there are no periodic points on the boundary of a Siegel disk if the latter has a neighborhood  on which $f$ and $f$ inverse are well defined and univalent (he calls this Siegel Disks of type I).  It is also not unlikely to think that the boundary of unbounded Siegel Disk could contain escaping points.   Observe that rays can not only intersect, but even be contained in the boundaries of attracting basins; for example, when the map $\lambda e^z$ has an attracting fixed point with  a completely invariant basin of attraction, it is known that the Julia set equals the boundary of such basin and consists of curves of escaping points together with their landing points. Proposition~\ref{Bounded Fatou Components} ensures that there cannot be escaping points on the boundary of  Fatou components which are in a basic region of polynomial type.
\end{rem}} 

We now proceed to prove the main result in the paper.

  \begin{proof}[Proof of Theorem~\ref{Singular orbits trapped in basic regions}]
 Let $\mathcal{X}$ be as in the statement. If it is a cycle of attracting or parabolic basins there is nothing to prove since each such cycle  contains a singular orbit. 
 The proof is  a refinement of the classical fact that Cremer points and the boundaries of Siegel disks are contained in the postsingular set (see    Corollary 14.4 in \cite[Corollary 14.4]{Mi} for Siegel Disks and  \cite[Theorem 9.3.4]{Bea} for Cremer points). 
 
{\bf Siegel  case.} 
Let $\mathcal{X}=\Delta_0,\Delta_1,\ldots \Delta_{q-1}$ be a cycle of Siegel disks of period $q$. Fix any $p$ multiple of $q$ and let  $\{B_i\}_{i=0\ldots q}$ be the  $q$ basic regions for $f^p$ containing the images $\Delta_1\ldots \Delta_{q-1}$ of $\Delta=\De_0$, with $f(\Delta_i)=\De_{i+1}$ and all indices are taken modulo $q$. 
Let us define the  set of singular values $S_B:=S(f)\cap (\cup B_i)$, and let us define  
  \[\PP_B:=\ov{\bigcup_{ s\in\SS_B,\ n\leq n_s} f^n(s)},\] 
where $n_s\leq \infty$ is the largest integer such that $f^n_s(s)\in \cup B_i$ for all $n\leq n_s$. 

Let $w\in \dDe_0\setminus \partial B_0$: such a point exists, or otherwise we would have $\De_0=B_0$, which is impossible ($B_0$ contains, for example, rays, which cannot intersect $\Delta_0$).

 Let us first  show that $w\in \PP_B$. Suppose by contradiction that there exists a simply connected neighborhood   $U$ of $w$ with $U\cap\PP_B=\emptyset$. Let us see that this implies that  for each $n$ there is a unique univalent  inverse branch  $\phi_n$ of $f^{-n}$ such that 
$\phi_n(\De_0\cap U)\subset \De_{-n\mod q}$; this would be the same as in \cite{Mi} if we were considering $\PP$ instead of $\PP_B$,  but in our case requires a further argument. Since $w\in \dDe_0\setminus \partial B_0$,  up to restricting $U$ we can assume that  $U\subset B_0$. Observe that any  preimage $V$ of $U$ under  $f^n$ with  $V\cap\De_i\neq\emptyset$ for some $i$, is fully contained in $B_i$. (Indeed, the graph $\Gamma_p$ formed by the closure of the rays invariant under $f^p$ is invariant under $f$.
 So, if  there was a point $z\in \partial( \cup B_i)\cap V\subset( \Gamma_p\cap V)$,   by forward  invariance of  $\Gamma_p$ we would have     $f(z)\in U\cap \Gamma_P$, a contradiction.)
 
 
 So  for each $n$ there is a unique univalent  inverse branch  $\phi_n$ of $f^{-n}$ such that 
$\phi_n(\De\cap U)\subset \De_{-n\mod q}$; the  $\{\phi_n\}$ form a normal family by Montel Theorem because they can be chosen to omit two values (say, a repelling orbit of period at least two which does not intersect $U$). 
 Hence a subsequence $\phi_{n_k}$ converges locally uniformly to an analytic limit map $\phi: U\ra \phi(U)$. 
 
  The map $\phi$ is not constant because no subsequence of $\phi_n$ is  converging to a constant on  $\De_0$. 
On the other hand, let $D\Subset \phi(U)$ be a slightly smaller topological disk still intersecting the Julia set. Then for $n$ large, $\phi_n(U)\supset D$, hence   $f^{n}|_{D}\subset U$ for infinitely many $n$, contradicting the fact that $D$ intersects $J(f)$.

So $\phi_n$ is defined and univalent on $U$ only for $n\leq n_1$ with $n_1$ maximal (possibly, $n_1=0$ if $U$ contains singular values). Since univalent branches in a simply connected open set $V$ are well defined and univalent only if $V$ does not contain singular values, it follows that there is a singular value $s_1$ for $f$ such that $s_1\in \phi_{n_1}(U)$. In particular, $f^n(s_1)\in f^n(\phi_{n_1}(U))\subset \cup B_i$ for all $n\leq n_1$.  
  The same  argument can be repeated for a decreasing sequence $\{U_j\}$ of nested simply connected neighborhoods of $w$, obtaining    an infinite sequence of points $w_j=f^{n_j}(s_j)$ accumulating on $w$ with $s_j\in \SS_B$ and $f^n(s_j)\in \cup B_i$ for $n\leq n_j$.

Now there are two (non-exclusive) cases.     Suppose first that  $s_j=s$ for some $s\in S(f)$ and infinitely many $j$. In this case $n_j\ra\infty$,   $f^n(s)\in \cup B_i$ for all $n$ and  case (1) occurs.  
 If this case does not occur, then there are infinitely many distinct $s_j\in S_B$ such that $f^n(s_j)\in \cup B_i$ for $n\leq n_j$ and we are in case (2). 
 In this case either  $n_j\ra\infty$ or, if $n_j$ has a bounded subsequence,   case (1) occurs.   Indeed, if $n_j$ has a bounded subsequence, there is some minimal $N>0$ and a subsequence $j_k$ such that $f^{N}(s_{j_k})$ converges to $w$. This implies that $s_{j_k}\ra \phi_N(w)$ hence since $\SS(f)$ is closed, $\phi_N(w)\in \SS(f)\cap\partial\Delta$. Since the union of the boundaries of the Siegel disks in the forward orbit of $\Delta$ is forward invariant,  $\phi_N(w)$ satisfies the hypothesis of case (1).

  Since singular values of $f^q$ are  the first $q-1$ images of singular values for $f$, it is directly implied by the construction that each $B_i$ contains a singular value $s_i$ for $f^q$  for which $f^{nq}(s_i)\in B_i$ for all $n\in\N$ such that  $nq\leq n(s_i)$.
  
{\bf Cremer case.} Let $z_0$ be a Cremer fixed point. The proof is the same as in the Siegel case except that the inverse branches $\phi_n$ are defined so as to fix $z_0$, and the  limit function $\phi$   is non-constant because $|\phi_n'(z_0)|=1$ for all $n$.  
 
Now suppose we are in case (1) and let $v\in B_0$ be the singular value for the cycle $\mathcal{X}$. Let $\mathcal{Y}$ be any other interior cycle  of period $\ell$. Let $p=\ell\cdot q$. By the Separation Theorem,  elements of $\mathcal{X}$ and elements of $\mathcal{Y}$ belong to different basic regions for $f^p$ and therefore  the orbit of $v$ is disjoint from the basic regions containing $\mathcal{Y}$.  
 \end{proof}
  
   \begin{rem}\label{Invisible} A repelling periodic point which is not the landing point of any periodic ray of any period is called  \emph{rationally invisible} and is an interior fixed point for $f^p$ for all $p$. It is expected that every repelling periodic orbit is the landing point of at least one but at most finitely many periodic rays of the same period, although partial results are only available when the postsingular set is bounded \cite{Hu}, \cite{BL14}, \cite{BRG17}). Theorem~\ref{Singular orbits trapped in basic regions} together with the Separation Theorem implies that the singular orbits given by case (1)  cannot accumulate on any rationally invisible repelling periodic point, since the latter belongs  to different basic regions for $f^p$ than $\XX$ when $p$ is large enough. \end{rem}

The next Proposition is another practical application of the philosophy that polynomial-type regions are associated to polynomial-type behaviour.

\begin{prop}[Regions of polynomial type] \label{case1forpoltype} Under the assumptions of Theorem~\ref{Singular orbits trapped in basic regions}, 
the first case always occurs if  at least one of the  $B_i$ is a basic region of polynomial-type. In this case  the singular value given by  Theorem~\ref{Singular orbits trapped in basic regions} is in fact a   \emph{critical} value.
\end{prop}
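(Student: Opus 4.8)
The plan is to exploit the polynomial‑type hypothesis to reduce, near the cycle $\XX$, to an honest polynomial and then quote Proposition~\ref{prop:Strong Shishi Poly}. I would first dispose of the easy cases: by Theorem~\ref{Singular orbits trapped in basic regions} there is nothing to prove when $\XX$ is attracting or parabolic, so assume $\XX$ is a cycle of Siegel disks or a Cremer cycle, and, after relabelling the indices, that $B_0$ is of polynomial type. By Proposition~\ref{Bounded Fatou Components} the element of $\XX$ inside $B_0$ --- a Siegel disk $\Delta_0$, resp.\ a Cremer point $z_0$ --- is bounded; and since the elements of $\XX$ form a cycle under the entire map $f$, continuity forces $\ov{\Delta_j}$ to be compact for every $j$ (in the Cremer case $\XX$ is automatically a finite set).

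Next I would construct a (weakly) polynomial‑like restriction of $f^q$ capturing $\XX$. Using the cutting argument in the proof of Proposition~\ref{Bounded Fatou Components} (Lemma~\ref{lem:cutting}) together with the fact, established there, that $B_0$ meets only finitely many fundamental domains --- on each of which $f$ is univalent --- one produces a bounded domain $W_0\subset B_0$ with $\ov{\Delta_0}\subset W_0$ (resp.\ $z_0\in W_0$) whose cutting arcs are thrown outside $W_0$ by the relevant iterates of $f$; pulling this structure back around the (bounded) cycle one obtains bounded domains $W_j\subset B_j$ with $\ov{\Delta_j}\subset W_j$ and finitely‑sheeted proper maps $f\colon W_j'\to W_{j+1}$, $W_j'\Subset W_j$, so that $g:=f^q\colon W_0'\to W_0$ is a polynomial‑like map in the sense of \cite{DH85} with $\Delta_0$ (resp.\ $z_0$) as its only non‑repelling fixed point. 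Because a polynomial‑like map carrying a Siegel disk or a Cremer point has degree $\geq 2$, it has critical points; and the set
\[
K:=\bigcap_{n\geq 0} f^{-n}\Bigl(\bigcup_j \ov{W_j}\Bigr)\cap \bigcup_j W_j
\]
is forward invariant under $f$ and contained in $\bigcup_j B_j$.

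Then, straightening $g$ to a polynomial $P$ of degree $\geq 2$ by a quasiconformal conjugacy near $K(g)=K\cap W_0$, Proposition~\ref{prop:Strong Shishi Poly} applied to $P$ yields a weakly recurrent critical point $c_P$ associated to the non‑repelling (Siegel or Cremer) fixed point of $P$; its orbit stays in $K(P)$ and accumulates on that fixed point (on its boundary, in the Siegel case). Transporting $c_P$ back through the straightening gives a critical point $c$ of $g=f^q$ lying in $K(g)$ whose $f^q$‑orbit accumulates on $\partial\Delta_0$ (resp.\ on $z_0$); since a critical point of $f^q$ inside $W_0$ is the image along the chain $W_0'\to W_1\to\cdots\to W_{q-1}\to W_0$ of a critical point of $f$, after replacing $c$ by one of its first $q-1$ iterates (all still in $K$) I may assume $c$ is a critical point of $f$ with $c\in K$. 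Setting $v:=f(c)$, we get a \emph{critical} value of $f$ with $f^n(v)=f^{n+1}(c)\in K\subset\bigcup_j B_j$ for all $n\geq 0$; following the orbit through the $W_j$ shows $f^n(v)\in B_i$ whenever $n\equiv i\pmod q$, after a cyclic relabelling, and applying $f^r$ to the $f^q$‑orbit of $c$ spreads its accumulation around the cycle, so the orbit of $v$ accumulates on $\XX$ (resp.\ on $\bigcup_j\partial\Delta_j$). This is exactly case (1) of Theorem~\ref{Singular orbits trapped in basic regions}, with $v$ a critical value; the last paragraph of that theorem then supplies the remaining non‑accumulation assertion.

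The main obstacle will be the construction in the second paragraph: converting the single hypothesis ``$B_0$ is of polynomial type'' into a genuine bounded (weakly) polynomial‑like restriction of $f^q$ around the \emph{entire} cycle $\XX$. The delicate point is to control the pieces $W_j$ that sit inside basic regions of transcendental type, so that the invariant compact $K$ cannot escape to $\infty$ through a tract and the return map to $W_0$ has finite degree; for this I would combine the boundedness of all the $\ov{\Delta_j}$ with the observation --- already visible in the proof of Proposition~\ref{Bounded Fatou Components} --- that near $\partial B_0$ the only unbounded directions lie inside finitely many fundamental domains, on which $f$ is univalent.
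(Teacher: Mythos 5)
Your strategy (renormalize $f^q$ around the cycle to a polynomial-like map, straighten, and quote Proposition~\ref{prop:Strong Shishi Poly}) is genuinely different from the paper's, but it has a gap exactly where you flag it, and that gap is not a technicality one can defer. First, the compact containment $W_0'\Subset W_0$ with $\ov{\Delta_0}\subset W_0\subset B_0$ that the straightening theorem requires can simply fail to be arrangeable: Proposition~\ref{Bounded Fatou Components} explicitly allows $\partial\Delta_0\cap\partial B_0$ to be nonempty (it may consist of boundary fixed points of $f^p$), and a fixed point of $f^q$ lying on $\ov{\Delta_0}\cap\partial B_0$ obstructs any choice of $W_0$ inside $B_0$ containing $\ov{\Delta_0}$, and in any case forces $\ov{W_0'}$ to meet $\partial W_0$. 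Weakly polynomial-like maps in the sense of \cite{DH85} are designed for precisely this situation, but they do not straighten, so Proposition~\ref{prop:Strong Shishi Poly} (stated for honest polynomials) cannot be invoked. Second, controlling the degree of $f\colon W_j'\to W_{j+1}$ through the intermediate regions $B_1,\dots,B_{q-1}$ of transcendental type requires showing that the relevant pullbacks stay bounded and away from the tracts; the only available tool for that is the trapping argument via $\Gamma_p$-invariance and the cut arcs $\gamma_\alpha$, i.e.\ the very argument the reduction is meant to replace. So the proposal as written does not close.

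The paper's proof is far more economical and stays entirely inside the normal-families argument of Theorem~\ref{Singular orbits trapped in basic regions}: since $B_0$ is of polynomial type, the inverse branches $\phi_n(U)$ constructed there are trapped in the bounded cut region $\Bhat_0$ (they cannot cross the arcs $\gamma_\alpha$ because $f(\gamma_\alpha)\cap\Bhat_0=\emptyset$). Consequently the only obstructions to extending the branches are critical values whose critical points lie in the bounded set $\Bhat_0$; there are only finitely many of these, so one critical value must obstruct infinitely often, which is case (1). If you want to salvage your approach, you would need to (i) replace straightening by a Fatou--Shishikura count valid for weakly polynomial-like maps and (ii) still run the trapping argument to get finiteness of the return degree --- at which point the direct argument is shorter.
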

\begin{proof}
  Suppose that  $B_0$ is a basic region of polynomial type. 
Consider the cut region $\Bhat_0\subset B_0$ as in Proposition~\ref{Bounded Fatou Components}. Observe that $\partial\Bhat_0\setminus \partial B_0=\{\gamma_\alpha\}$. In the construction above notice that $\phi_n(U)\subset \Bhat_0$ for all $n$, otherwise $\phi_n(U)$ would contain points in $\gamma_\alpha$ for some minimal $n$ which is impossible because $f(\gamma_\alpha)\cap \Bhat_0=\emptyset$ and $\phi_{n-1}(U)\subset \Bhat_0$.  {This implies that all singular values preventing the continuation of the corresponding inverse branches  are critical values, since finite non-critical preimages of singular values do not obstruct the extension. Since $\Bhat_0$ is bounded it contains  only finitely many critical points,  hence finitely many singular values, and  one of them has to accumulate on $\dDe$ infinitely many times implying the occurrence of  case (1).}
\end{proof}

 From the proof of Theorem~\ref{Singular orbits trapped in basic regions} we obtain the following  corollary, which says that every  basic region $B$ for $f^p$ whose interior fixed point is non-repelling contains either infinitely many singular values for $f^p$, or at least a singular value for $f^p$ which returns to $B$ infinitely many times.
 
\begin{cor}[Singular Values and basic regions]\label{cor:Trapping Theorem} Let $f$ be a polynomial or an entire transcendental map in class $\BBhat$ whose periodic  rays land.    Let $B$ be a   basic region for $f^p$  whose interior fixed point $z_0$ is non-repelling, or which contains an attracting parabolic basin. Then at least one    of the two following   cases occur:
\begin{enumerate}
\item  $B$ contains  at least one singular value $s$ for $f^p$  whose orbit $f^{np}(s)$ is contained in $B$ for all $n\geq0$ and   accumulates either  on the parabolic, attracting  or Cremer fixed point, or on the boundary of the associated Siegel disk.
\item  $B$ contains  infinitely many singular values $s_j$ for $f^p$ such that $f^{pn}(s_j)\in B$ for all $n\leq n_j \to \infty$ as $j\to \infty$, and $\{f^{pn_j}(s_j)\}_{n,j}$ accumulates either  on  the interior periodic point, or on the boundary of the associated Siegel disk. 
\end{enumerate}  

 The first case always occurs if $z_0$ is attracting, or $B$ contains a parabolic basins,  or  $B$ is a basic region of polynomial-type, or $f$ has finitely many singular values. 
\end{cor}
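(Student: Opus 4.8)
The plan is to reduce everything to Theorem~\ref{Singular orbits trapped in basic regions} applied to the iterate $g:=f^p$, regarded as a map for which we look at the basic regions of $g^1$. First I would check that the hypotheses are inherited: the class $\BBhat$ is closed under composition, so $g\in\BBhat$ whenever $f\in\BBhat$; and every ray fixed by some $g^k=f^{pk}$ is a periodic ray of $f$, hence lands by assumption, so all rays fixed by iterates of $g$ land and the Separation Theorem~\ref{Separation Theorem Entire} applies to $g$. By definition the basic regions for $f^p$ are exactly the basic regions of $g$, so $B$ is one of them, and its interior fixed (or virtual) point is the interior object attached to $B$. The singular values of $g=f^p$ are the points $f^j(s)$ with $s\in S(f)$ and $0\le j\le p-1$, which is precisely the notion of ``singular value for $f^p$'' used in the statement and at the end of the proof of Theorem~\ref{Singular orbits trapped in basic regions}. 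When $f$ is a polynomial the identical reduction works, with external rays replacing dynamic rays and the Goldberg--Milnor separation theorem \cite{GM} replacing Theorem~\ref{Separation Theorem Entire}.

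With this reduction in place I would split into the three possibilities for the interior object of $B$. If it is an attracting fixed point of $g$ or a $g$-invariant parabolic basin, then that basin contains a singular value $s$ of $g$ \cite{Mi}, and the whole $g$-orbit of $s$ stays in the basin, hence in $B$, and accumulates on the attracting or parabolic point; this is case (1) with $n_s=\infty$, and it shows that case (1) always holds in these two situations. If the interior object is a Cremer fixed point of $g$ or the centre of a $g$-invariant Siegel disk $\Delta$, I would run the Cremer/Siegel argument of the proof of Theorem~\ref{Singular orbits trapped in basic regions} with $q=1$: starting from a point $w\in\partial\Delta\setminus\partial B$ (respectively $w=z_0$), one lifts a shrinking sequence of simply connected neighbourhoods $U_j\subset B$ of $w$ by univalent branches $\phi_n$ of $g^{-n}$ (these exist and stay in $B$ because the graph $\Gamma_p$ is forward invariant under $f$), a Montel argument produces a non-constant limit and shows that each branch breaks down at a finite time $n_j$ because of a singular value $s_j$ of $g$ lying in $\phi_{n_j}(U_j)\subset B$ with $g^n(s_j)\in B$ for all $n\le n_j$; then either some $s_j$ recurs infinitely often, giving case (1), or there are infinitely many distinct $s_j$ with $n_j\to\infty$, giving case (2).

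Finally I would dispose of the last ``first case always occurs'' clause. If $f$, and hence $g=f^p$, has only finitely many singular values, case (2) is impossible and case (1) must hold. If $B$ is of polynomial type I would quote Proposition~\ref{case1forpoltype} for $g$ and the one-element cycle $\{z_0\}$: the cut region $\Bhat$ is bounded, contains only finitely many critical points of $g$, and the obstructions to extending the $\phi_n$ are critical values inside $\Bhat\subset B$, so one of them must accumulate infinitely often on $\partial\Delta$ (respectively on $z_0$), forcing case (1) and realising the singular value as a critical value. The main point to get right --- and essentially the only non-formal step --- is the bookkeeping identifying basic regions and singular values of $f^p$ with those of $g$, and confirming that the landing hypothesis of the Separation Theorem for $g$ follows from the landing of periodic rays of $f$; once that is settled, the corollary is a direct transcription of Theorem~\ref{Singular orbits trapped in basic regions} and Proposition~\ref{case1forpoltype}.
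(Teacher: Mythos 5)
Your proposal is correct and follows essentially the same route as the paper: dispose of the attracting/parabolic case via the standard fact that such basins contain a singular orbit, reduce the Cremer/Siegel case to Theorem~\ref{Singular orbits trapped in basic regions}, and invoke Proposition~\ref{case1forpoltype} for polynomial-type regions. The only difference is that you pass to the iterate $g=f^p$ and apply the theorem with $q=1$ (which obliges you to justify that the dynamic rays and basic regions of $g$ coincide with those of $f^p$ --- the bookkeeping you yourself flag as the delicate point), whereas the paper applies Theorem~\ref{Singular orbits trapped in basic regions} directly to $f$ with the cycle of period $q\mid p$ and the basic regions for $f^p$, a setting the theorem already covers, and then converts the resulting singular value of $f$ into one for $f^p$ via $S(f^p)=S(f)\cup f(S(f))\cup\cdots\cup f^{p}(S(f))$, thereby sidestepping the iterate identification altogether.
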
 

\begin{proof}[Proof of Corollary~\ref{cor:Trapping Theorem}]Let $B$ be a basic region for $f^p$ whose interior fixed point is non-repelling. If the interior fixed point is attracting,  or if $B$ contains a  parabolic basin invariant under $f^p$, there is nothing to prove, since in this case the cycle of  basins contains a singular orbit for $f$. So we can assume that    $B$ contains either  a Cremer  point $z_0$ or a  Siegel disk $\Delta$ of period $q$, which necessarily divides  $p$ since it is fixed under $f^p$. The claim then follows from   Theorem~\ref{Singular orbits trapped in basic regions}, and the fact that $S(f^p)=S(f)\cup f(S(f))\cup\ldots\cup f^p(S(f))$.  
 
 The fact that polynomial type basic regions are always in case (1) follows from Proposition \ref{case1forpoltype}.
\end{proof}

As the final  corollary of Theorem~\ref{Singular orbits trapped in basic regions} we obtain the improvement on the  classical Fatou Shishikura mentioned in the introduction.

{
\begin{proof}[Proof of Proposition~\ref{FS1}]
Suppose by contradiction that $f$ has $q$ singular values and more than $q$ non-repelling cycles (possibly infinitely many). Take $N+1$ of them and let $p$ be the product of their periods.
Each element in each of the $N+1$ cycles is fixed by $f^p$ hence belongs to a different basic region for $f^p$. In particular,  there are $N+1$ disjoint collections of basic regions which by Theorem~\ref{Singular orbits trapped in basic regions} have to contain each a singular value for $f$ as  well as its entire orbit, giving a contradiction. 

Let  $\mathcal{X}$ be any non-repelling cycle of period $q$ and let $s$ be the singular value given by Theorem~\ref{Singular orbits trapped in basic regions}. Let $\mathcal{Y}$ be any other non-repelling cycle or a  rationally invisible repelling periodic cycle  of period $\ell$. Let $p=\ell\cdot q$. Then elements of $\mathcal{X}$ and elements of $\mathcal{Y}$ belong to different basic regions for $f^p$. By part 1. in Theorem~\ref{Singular orbits trapped in basic regions} the orbit of $s$ is disjoint from the basic regions containing $\mathcal{Y}$.  
\end{proof}
}

\bibliographystyle{amsalpha}
\bibliography{basicregionssv}
\end{document}